\newtheorem{thm}{Theorem}[section]
\newtheorem{cor}{Corollary}[section]
\newtheorem{lem}{Lemma}[section]
\newtheorem{rem}{Remark}[section]
\theoremstyle{Problem}
\theoremstyle{definition}
\numberwithin{equation}{section}
\def\beq{\begin{equation}}
\def\deq{\end{equation}}
\def\cB{{\mathcal B}}
\def\cM{{\mathcal M}}
\def\mC{{\mathbb C}}
\def\mE{{\mathbb E}}
\def\mI{{\mathbb I}}
\def\mN{{\mathbb N}}
\def\mP{{\mathbb P}}
\def\mQ{{\mathbb Q}}
\def\mR{{\mathbb R}}
\def\mS{{\mathbb S}}
\def\sB{{\mathscr B}}
\def\sF{{\mathscr F}}
\def\geq{\geqslant}
\def\leq{\leqslant}
\def\e{{\mathrm{e}}}
\def\[{{\Big[}}
\def\]{{\Big]}}
\def\<{{\langle}}
\def\>{{\rangle}}
\def\({{\Big(}}
\def\){{\Big)}}
\def\dif{{\rm d}}
\def\={&\!\!=\!\!&}
\def\bt{\begin{theorem}}
\def\et{\end{theorem}}
\def\bl{\begin{lemma}}
\def\el{\end{lemma}}
\def\br{\begin{rem}}
\def\er{\end{rem}}
\begin{document}

\title
[Degenerate SDEs with Sobolev Diffusion coefficients]
{Strong Uniqueness of Degenerate SDEs with H\"older diffusion coefficients}

\author{Zhen Wang and Xicheng Zhang}

\address{Zhen Wang: School of Mathematics and Statistics, Wuhan University, Wuhan, Hubei, 430072, P.R.China}
\email{wangzhen881025@163.com}

\address{Xicheng Zhang: School of Mathematics and Statistics, Wuhan University, Wuhan, Hubei, 430072, P.R.China}
\email{XichengZhang@gmail.com}

\thanks{This work is partially supported by an NNSFC grant of China (No. 11731009). }

\begin{abstract}
In this paper we prove a new strong uniqueness result and a weak existence result
for possibly {\it degenerate} multidimensional stochastic differential equations with Sobolev diffusion coefficients and rough drifts.
In particular, examples with H\"older diffusion coefficients are provided to show our results.
\end{abstract}

\subjclass[2000]{}

\maketitle

\section{Introduction}

Consider the following stochastic differential equation in $\mR^d$:
\beq\label{1}
\dif X_t=b(t,X_t)\dif t+\sigma(t,X_t)\dif W_t, \quad X_0=x,
\deq
where $b:\mathbb{R}_+\times\mathbb{R}^d\rightarrow \mathbb{R}^d$
and $\sigma:\mathbb{R}_+\times\mathbb{R}^d\rightarrow
\mathbb{R}^d\otimes\mathbb{R}^m$ are two Borel measurable functions, and $W$ is an $m$-dimensional
standard Brownian motion defined on a complete filtered probability space $(\Omega, \mathscr{F}, \mP; (\sF_t)_{t\geq 0})$.

It is a classical fact that when $b$ and $\sigma$ are uniformly Lipschitz continuous in $t$ with respect to the spatial variable $x$, SDE \eqref{1} admits a unique strong solution. However, when $b$ and $\sigma$ are non-Lipschitz continuous, the pathwise uniqueness would be broken as the case of ordinary differential equations. In one dimensional case, the famous Yamada and Watanabe's theorem \cite{W-Y} provides a sufficient condition for pathwise uniqueness. More precisely, suppose that $d=1$ and for some concave functions
$\gamma,\rho:\mR_+\to\mR_+$,
$$
|b(t,x)-b(t,y)|\leq\gamma(|x-y|),\ |\sigma(t,x)-\sigma(t,y)|\leq \rho(|x-y|),
$$
where $\int_{0+}1/\gamma(s)\dif s=\infty$ and $\int_{0+}1/\rho^2(s)\dif s=\infty$, then pathwise uniqueness holds for SDE \eqref{1} with $d=1$. It is noticed that $\sigma(t,x)=|x|^{1/2}$ satisfies the above condition. While in the multidimentional case, if we require $\int_{0+}1/\rho(s)\dif s=\infty$, 
then pathwise uniqueness still holds (see \cite{W-Y} and \cite{Fa-Zh}). 
Moreover, when $d\geq 2$, the above conditions are almost optimal (see \cite{Sw} for a counter-example).


On the other hand, when $\sigma$ is uniformly nondegenerate,  that is, $\sigma\sigma^*$ is strictly positive, there are many results devoted to the 
study of pathwise uniqueness.
We only mention parts of them. In \cite{V}, Veretennikov showed the study of pathwise uniqueness of SDE \eqref{1}
when $\sigma=\mI$ and $b$ is bounded measurable. In \cite{K-R},
Krylov and R\"ockner showed strong well-posedness of SDE \eqref{1} when $\sigma=\mI$ and $b\in L^q_{loc}(\mR_+; L^p(\mR^d))$ for $2/q+d/p<1$.
In \cite{Zh2} and \cite{Zh3}, we extend Krylov and R\"ockner's result to the multiplicative noise case under Sobolev diffusion and the same drift coefficients.
See also \cite{Zh4} for the study of the weak differentiability of the solutions with respect to the initial values.
Moreover, when $b$ and $\sigma$ are only bounded measurable, and $\sigma$ is uniformly nondegenerate,
the weak existence of SDE \eqref{1} was shown in \cite{K} based on the Krylov estimate established by himself.

However, when $b$ is irregular and $\sigma$ is not uniformly nondegenerate, there are few results to discuss the weak existence and pathwise uniqueness.
In a very special case, Kumar \cite{Ku} and Luo \cite{Lu} obtain the pathwise uniqueness of multidimensional SDEs with H\"older diffusion coefficients. 
Essentially, it is the same as Yamada-Watanabe's result.
It is noticed that in \cite{Ch-Ja}, the authors obtain some special results about the pathwise uniqueness and weak existence based on some results from PDEs' theory.
In this paper, we find simple conditions to guarantee the pathwise uniqueness for multidimensional SDEs without assuming the {\it uniform ellipticity} and Lipschitz continuity. More precisely, we shall show the following results. 
\begin{thm}\label{thm1}
Suppose that there exists a nonnegative measurable function $F$ with
\begin{align}\label{FF}
\int_0^t\int_{B_R}F(s,x)^p\det(\sigma\sigma^*)^{-1}(s,x)\dif x\dif s<\infty,\ t, R>0
\end{align}
for some $p\geq d+1$, and such that for Lebesgue-almost all $s,x,y$:
$$
2\<x-y,b(s,x)-b(s,y)\>+\|\sigma(s,x)-\sigma(s,y)\|^2\leq |x-y|^2(F(s,x)+F(s,y)).
$$
Then the local pathwise uniqueness holds for SDE \eqref{1}.
Moreover, if $b$ and $\sigma$ are time-independent, then the above requirement $p\geq d+1$ can be replaced with $p\geq d$.
\end{thm}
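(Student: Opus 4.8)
The plan is to run a standard Itô-Zvonkin type argument adapted to the degenerate setting. Let $X_t$ and $Y_t$ be two solutions of \eqref{1} with the same initial value $x$ and driven by the same Brownian motion $W$, and set $Z_t := X_t - Y_t$. The goal is to show $Z_t \equiv 0$ up to the exit time from a ball $B_R$. Applying Itô's formula to $|Z_t|^2$, the drift and martingale brackets combine exactly into the quantity controlled by the hypothesis:
\beq
\dif |Z_t|^2 = \Big(2\<Z_t, b(t,X_t)-b(t,Y_t)\> + \|\sigma(t,X_t)-\sigma(t,Y_t)\|^2\Big)\dif t + \dif M_t,
\deq
where $M_t$ is a local martingale. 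By the one-sided Lipschitz-type bound in the statement, the $\dif t$-term is dominated by $|Z_t|^2\big(F(t,X_t) + F(t,Y_t)\big)$, so that $\dif |Z_t|^2 \leq |Z_t|^2\,\dif A_t + \dif M_t$ with $A_t := \int_0^t (F(s,X_s)+F(s,Y_s))\,\dif s$. If $A_t$ were bounded one could close the argument by a Gronwall/Doléans-Dade exponential comparison immediately; the whole difficulty is that $F$ is only in a weighted $L^p$ space, hence $A_t$ is merely a finite (but possibly unbounded) continuous increasing process.

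The key technical step — and the place where the hypothesis \eqref{FF} with $p\geq d+1$ (or $p\geq d$ in the autonomous case) is consumed — is a Khasminskii-type estimate showing that $\ee\big[\e^{\lambda A_{t\wedge\tau_R}}\big] < \infty$ for every $\lambda > 0$, where $\tau_R$ is the exit time of $(X,Y)$ from $B_R\times B_R$. This in turn rests on a Krylov-type estimate: for a solution $X$ of the (possibly degenerate) SDE, one has, for $f$ supported in $B_R$,
\beq
\ee\int_0^{t\wedge\tau_R} f(s,X_s)\,\dif s \leq C\Big(\int_0^t\int_{B_R} |f(s,x)|^p \det(\sigma\sigma^*)^{-1}(s,x)\,\dif x\,\dif s\Big)^{1/p}.
\deq
I would establish this by the Krylov argument: apply Itô to $u(t,X_t)$ where $u$ solves a backward parabolic equation $\p_t u + \tfrac12\mathrm{tr}(\sigma\sigma^* \Hess u) = f$ (or the elliptic analogue in the time-independent case), and bound $\|\Hess u\|$ using the Aleksandrov-Bakelman-Pucci / Alexandrov maximum principle, whose natural scaling involves $\det(\sigma\sigma^*)^{-1}$ and the $L^{d+1}$ (parabolic) or $L^d$ (elliptic) norm of $f$ — this is precisely why the exponents $d+1$ and $d$ appear, and why the autonomous case is better. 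Iterating this estimate over $n$-fold time-ordered integrals in the exponential series gives the Khasminskii bound $\ee[\e^{\lambda A_{t\wedge\tau_R}}] \leq \sum_n \lambda^n (C\|F\|)^n / n! < \infty$.

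Once the Khasminskii bound is in hand, the conclusion follows by a localization-plus-Gronwall scheme. Introduce the stopping times $\zeta_m := \inf\{t: A_t \geq m\}\wedge\tau_R$; on $[0,\zeta_m]$ the process $A$ is bounded, so a stochastic Gronwall lemma (using that the martingale part has a controllable bracket once we are inside $B_R$ where $\sigma$ is locally bounded) yields $\ee|Z_{t\wedge\zeta_m}|^2 = 0$, hence $Z \equiv 0$ on $[0,\zeta_m]$. Letting $m\to\infty$ and using $A_{\tau_R} < \infty$ a.s.\ (a consequence of the finite exponential moment) gives $Z\equiv 0$ on $[0,\tau_R]$, i.e.\ local pathwise uniqueness. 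I expect the main obstacle to be the Krylov estimate with the sharp weight $\det(\sigma\sigma^*)^{-1}$ and the sharp exponent: handling the degeneracy requires being careful that the ABP estimate is applied on the region where $\det(\sigma\sigma^*)>0$ and that the resulting constant does not blow up, and the improvement to $p\geq d$ in the time-independent case needs the elliptic (rather than parabolic) ABP inequality together with an occupation-time argument that trades the time variable for nothing.
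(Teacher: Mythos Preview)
Your plan is essentially the paper's proof: It\^o's formula on $|Z_t|^2$, the hypothesis to get $\dif|Z_t|^2\leq |Z_t|^2\,\dif A_t+\dif M_t$, Krylov's estimate to control $A_t$, and a stochastic Gronwall inequality combined with localization by $\{A_t\geq m\}$ to conclude $Z\equiv 0$.

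Two economies the paper makes relative to your outline are worth noting. First, the Khasminskii exponential-moment step is unnecessary: a single application of Krylov's estimate gives $\ee A_{t\wedge\tau_R}<\infty$, hence $A_{t\wedge\tau_R}<\infty$ a.s., and that alone forces the stopping times $\zeta_m:=\inf\{t:A_t\geq m\}$ to increase to $\infty$. With $A_{\cdot\wedge\zeta_m}\leq m$ the exponential factor in Scheutzow's stochastic Gronwall inequality is trivially bounded, so no iteration of Krylov's estimate is needed (and your series $\sum_n \lambda^n(C\|F\|)^n/n!$ is not what Khasminskii actually produces anyway --- the usual argument gives a geometric series on short intervals). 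Second, the Krylov estimate with the weight $\det(\sigma\sigma^*)^{-1}$ is not re-derived but quoted directly from Krylov's book: applied to the It\^o process $X_t$ with $\sigma_s=\sigma(s,X_s)$ and test function $F\cdot\det(\sigma\sigma^*)^{-1/(d+1)}$, estimate \eqref{2} yields exactly the bound you wrote, and the elliptic version \eqref{22} gives the improvement to $p\geq d$ in the autonomous case.
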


We have the following easy corollary.
\begin{cor}\label{cor5}
Suppose that $\sigma:\mathbb{R}^d\rightarrow\mathbb{R}^d\otimes\mathbb{R}^m$ satisfies for some $p\geq d$,
\begin{align}\label{HG1}
\int_{B_R}(\cM_R|\nabla\sigma|(x))^p\det(\sigma\sigma^*)^{-1}(x)\dif x<\infty,\ R>0,
\end{align}
where $\nabla\sigma$ stands for the generalized gradient, and $\cM_R|\nabla\sigma|$ is the local Hardy-Littlewood maximal function defined by
$$
\mathcal{M}_R\phi(x):=\sup_{0<r<R}\frac{1}{|B_r|}\int_{B_r}\phi(x+y)\dif y.
$$
Then  the local pathwise uniqueness holds for SDE
$$
\dif X_t=\sigma(X_t)\dif W_t,\ \ X_0=x.
$$
\end{cor}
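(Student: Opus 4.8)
The plan is to derive Corollary \ref{cor5} by a direct verification of the hypotheses of the time-independent case of Theorem \ref{thm1}, applied with $b\equiv 0$. In that case the structural inequality required by Theorem \ref{thm1} reduces to producing a nonnegative measurable $F$ with
$$
\|\sigma(x)-\sigma(y)\|^2\leq |x-y|^2\big(F(x)+F(y)\big)\qquad\text{for a.e.\ }x,y,
$$
and with $\int_{B_R}F(x)^{q}\det(\sigma\sigma^*)^{-1}(x)\dif x<\infty$ for some $q\geq d$ and all $R>0$. So the whole task is to build such an $F$ out of $\nabla\sigma$, and the right candidate is a localized version of $(\cM_R|\nabla\sigma|)^2$.

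The tool that makes this work is the classical pointwise estimate for Sobolev functions: there is a constant $C_d$ depending only on $d$ such that for every $f\in W^{1,1}_{loc}(\mR^d)$ and Lebesgue-a.e.\ $x,y$ lying in a fixed ball $B_R$,
$$
|f(x)-f(y)|\leq C_d\,|x-y|\Big(\cM_{2R}|\nabla f|(x)+\cM_{2R}|\nabla f|(y)\Big).
$$
This is proved by writing $f(x)-f(y)$ as a telescoping sum of averages of $f$ over a chain of dyadically shrinking balls joining $x$ and $y$ (plus the two endpoint averages), bounding each consecutive difference of averages by the $L^1$ Poincar\'e inequality, and dominating the resulting integrals of $|\nabla f|$ by $\cM_{2R}|\nabla f|$ at $x$ or $y$. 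Applying this to each entry of $\sigma$, summing over entries, squaring, and using $(a+b)^2\leq 2a^2+2b^2$, we obtain for a.e.\ $x,y\in B_R$
$$
\|\sigma(x)-\sigma(y)\|^2\leq C_d'\,|x-y|^2\Big((\cM_{2R}|\nabla\sigma|(x))^2+(\cM_{2R}|\nabla\sigma|(y))^2\Big).
$$

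Because pathwise uniqueness for $\dif X_t=\sigma(X_t)\dif W_t$ is a local property, it is enough to verify the hypotheses of Theorem \ref{thm1} for the solutions stopped at the exit time of $B_R$; we may therefore fix $R>0$ and take $F(x):=C_d'(\cM_{2R}|\nabla\sigma|(x))^2\mathbf{1}_{B_{2R}}(x)$, which by the last display satisfies the structural inequality on $B_R$. The integrability condition $\int_{B_R}F^{q}\det(\sigma\sigma^*)^{-1}<\infty$ then amounts to $\int_{B_{2R}}(\cM_{2R}|\nabla\sigma|)^{2q}\det(\sigma\sigma^*)^{-1}<\infty$ for a suitable $q\geq d$, which is exactly what \eqref{HG1} provides once the powers are matched (and, if a lower power is needed, a H\"older estimate on the bounded ball $B_{2R}$ finishes the job). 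Feeding this $F$ into the time-independent case of Theorem \ref{thm1} yields the asserted local pathwise uniqueness. The pointwise Sobolev inequality is standard, so the only places demanding care are the localization step (the local character of pathwise uniqueness is what permits $F$ to depend on $R$) and the exponent bookkeeping between the power appearing in \eqref{HG1} and the power $q\geq d$ needed in Theorem \ref{thm1}; this matching is precisely why \eqref{HG1} is phrased in terms of $(\cM_R|\nabla\sigma|)^p$ rather than $\cM_R|\nabla\sigma|$.
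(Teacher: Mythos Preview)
Your proposal is essentially the paper's own proof with more detail spelled out: both invoke the pointwise maximal-function inequality
\[
|\sigma(x)-\sigma(y)|\leq C\,|x-y|\big(\cM_R|\nabla\sigma|(x)+\cM_R|\nabla\sigma|(y)\big),\qquad x,y\in B_R,
\]
and then apply the time-independent case of Theorem~\ref{thm1} with $b\equiv 0$ and $F=C(\cM_R|\nabla\sigma|)^2$.

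One remark on the ``exponent bookkeeping'' you flag at the end. With this choice of $F$, the integrability hypothesis of Theorem~\ref{thm1} reads $\int_{B_R}(\cM_R|\nabla\sigma|)^{2q}\det(\sigma\sigma^*)^{-1}\,\dif x<\infty$ for some $q\geq d$, so the exponent on the maximal function must effectively be at least $2d$. Assumption~\eqref{HG1} as stated only asks for exponent $p\geq d$; your parenthetical about a H\"older estimate on $B_{2R}$ does not bridge the range $d\leq p<2d$, since one would need a \emph{higher} power of $\cM_R|\nabla\sigma|$, and the weight $\det(\sigma\sigma^*)^{-1}$ prevents a naive interpolation. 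The paper's two-line proof does not address this point either, so your argument is on the same footing; just be aware the matching is clean only for $p\geq 2d$.
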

\begin{proof}
It is well known that for all $x,y\in B_R$ (for example, see \cite[Lemma 3.5]{Zh33}),
$$
|\sigma(x)-\sigma(y)|\leq C|x-y|(\cM_R|\nabla\sigma|(x)+\cM_R|\nabla\sigma|(y)).
$$
Now we can apply Theorem \ref{thm1} to conclude the result.
\end{proof}
Below we provide several examples to show our result.

\noindent{\bf Example 1} Let $d> 2n$ with $n\in\mN$ and $\alpha\in(0,1]$, $\beta\in[\alpha,1]$. Consider the following diffusion matrix $\sigma(x)$
$$
 \sigma(x)=
\left( \begin{array}{ccccccc}
|x|^{\alpha}&\cdots & 0 & 0 & \cdots & 0\\
\vdots & \ddots & \vdots  & \vdots & \ddots & \vdots \\
0& \cdots & |x|^{\alpha} & 0 & \cdots & 0\\
0 & \cdots & 0 & |x|^\beta+1 & \cdots & 0 \\
\vdots & \ddots & \vdots & \vdots& \vdots & \vdots \\
0 & \cdots & 0 & 0 &\cdots & |x|^\beta+1 \\
\end{array}\right)
:=\left( \begin{array}{cc}
A& 0\\
0& B
\end{array}\right),
$$
where $A$ is a $n\times n$ matrix and $B$ is a $(d-n)\times(d-n)$ matrix.
One can check that \eqref{HG1} holds.
Indeed, it is easy to see that
\begin{align}\label{HG3}
\det (\sigma\sigma^{*})(x)=|x|^{2n\alpha}(|x|^\beta+1)^{2(d-n)}\geq |x|^{2n\alpha}.
\end{align}
Moreover, for $\gamma\in(0,1)$, we have the following easy fact: 
\begin{align}\label{HG8}
\sup_{s>0}\frac{1}{|B_s|}\int_{B_s}|x+y|^{-\gamma}\dif y\leq c|x|^{-\gamma},
\end{align}
which follows by the following observation
$$
\frac{1}{|B_s|}\int_{B_s}|x+y|^{-\gamma}\dif y
\leq\left\{
\begin{aligned}
&\frac{1}{|B_s|}\int_{B_s}||x|-|y||^{-\gamma}\dif y\leq 2^\gamma|x|^{-\gamma},\ s<|x|/2,\\
&\frac{1}{|B_s|}\int_{B_{4s}}|y|^{-\gamma}\dif y\leq c s^{-\gamma}\leq c|x|^{-\gamma},\ s\geq|x|/2,
\end{aligned}
\right.
$$
where $c=c(d,\gamma)>0$.
Hence, by \eqref{HG3}, \eqref{HG8} and $d>2n$,
\begin{align*}
\int_{B_R}(\cM_R|\nabla\sigma|(x))^d\det(\sigma\sigma^*)^{-1}(x)\dif x
\leq C\int_{B_R}|x|^{(\alpha-1)d-2n\alpha}\dif x<\infty.
\end{align*}
Notice that the function $x\mapsto |x|^\alpha$ is only $\alpha$-order H\"older continuous at point $0$.

\noindent{\bf Example 2}
Let $d=3$ and $m_1,m_2,m_3\geq 2$. Let $\alpha\in(0,1)$
and $y_{ij}\in\mR^d, i=1,\cdots,m_j,\ j=1,2,3$. Consider the following $\sigma(x)$
$$
 \sigma(x)=
\left( \begin{array}{ccc}
\sum_{i=1}^{m_1}|x-y_{i1}|^{\alpha} & 0 &0 \\
0 & \sum_{i=1}^{m_2}|x-y_{i2}|^{\alpha} & 0 \\
0 & 0 & \sum_{i=1}^{m_3}|x-y_{i3}|^{\alpha} \\
\end{array}\right).
$$
As above, one can check that \eqref{HG1} holds. Notice that $\sigma$ is H\"older continuous at points $y_{ij}$.

\noindent{\bf Example 3} Let $\alpha\in(0,1)$. Consider the following one-dimensional SDE:
$$
\dif Z_t=\left(|Z_t|^\alpha+|W^{(2)}_t|^\alpha+|W^{(3)}_t|^\alpha\right)\dif W^{(1)}_t,\ Z_0=z,
$$
where $(W^{(1)}, W^{(2)}, W^{(3)})$ is a three dimensional standard Brownian motion. The above SDE can be written as a 
three dimensional SDE with $X_t=(Z_t, W^{(2)}_t, W^{(3)}_t)$ and
$$
 \sigma(x)=
\left( \begin{array}{ccc}
|x_1|^\alpha+|x_2|^\alpha+|x_3|^\alpha & 0 &0\\
0 & 1 &0\\
0 & 0 &1\\
\end{array}\right).
$$
One can verify that \eqref{HG1} holds 
for the above $\sigma$. It should be observed that the H\"older continuity index can be less than $1/2$ compared with Yamada-Watanabe's 
classical result due to the regularization effect of Brownian noises.

About the weak existence, we have the following result.
\begin{thm}\label{Th2}
Suppose that $b$ and $\sigma$ are locally bounded measurable and linear growth, and
\begin{align}\label{HG2}
\sup_{s\in[0,T]}\int_{B_R}|\det(\sigma\sigma^*)(s,x)|^{-2d-3}\dif x<\infty.
\end{align}
Then SDE \eqref{1} admits a weak solution. More precisely, there is a filtered probability space $(\Omega,\sF,\mP; (\sF_t)_{t\geq 0})$ and two $\sF_t$-adapted
processes $(X,W)$ defined on it so that

(i) $W$ is an $\sF_t$-Brownian motion;

(ii) $(X,W)$ satisfies the following integral equation:
$$
X_t=x+\int^t_0\sigma(s,X_s)\dif W_s+\int^t_0\sigma(s,X_s)\dif s.
$$
Moreover, if $b$ and $\sigma$ are time independent, then \eqref{HG2} can be replaced with
\begin{align}\label{hG5}
\int_{B_R}|\det(\sigma\sigma^*)(x)|^{-2d-1}\dif x<\infty.
\end{align}
\end{thm}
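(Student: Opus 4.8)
The plan is to construct the weak solution by the classical route: regularize to obtain nondegenerate approximations, prove uniform a priori estimates, extract a tight limit, and identify it. The r\^ole of \eqref{HG2} (resp.\ \eqref{hG5}) is to furnish a Krylov-type occupation-time estimate that survives the possible degeneracy of $\sigma\sigma^*$ \emph{uniformly} along the approximation, which is precisely what lets one pass to the limit in \eqref{1} when $b$ and $\sigma$ are only measurable. \textbf{Step 1 (nondegenerate approximation).} For $\eps\in(0,1)$ set $\sigma_\eps:=(\sigma,\eps\mI):\mR_+\times\mR^d\to\mR^d\otimes\mR^{m+d}$, so that $\sigma_\eps\sigma_\eps^*=\sigma\sigma^*+\eps^2\mI\geq\eps^2\mI$. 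Since $b$ and $\sigma_\eps$ are locally bounded, measurable and of linear growth, the standard localization (linear growth excludes explosion) together with Krylov's weak existence theorem \cite{K} for nondegenerate bounded measurable coefficients yields, for each $\eps$, a weak solution of
\begin{equation*}
\dif X^\eps_t=b(t,X^\eps_t)\,\dif t+\sigma(t,X^\eps_t)\,\dif W^\eps_t+\eps\,\dif\bar W^\eps_t,\qquad X^\eps_0=x,
\end{equation*}
where $(W^\eps,\bar W^\eps)$ is an $(m+d)$-dimensional Brownian motion with $W^\eps$ $m$-dimensional.

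\textbf{Step 2 (uniform estimates, tightness, uniform Krylov bound).} Linear growth together with the Burkholder--Davis--Gundy and Gronwall inequalities yields, uniformly in $\eps$, $\ee(\sup_{t\leq T}|X^\eps_t|^{2k})\leq C_{k,T}(1+|x|^{2k})$ and $\ee|X^\eps_t-X^\eps_s|^{2k}\leq C_{k,T}|t-s|^k$; hence the laws of $(X^\eps,W^\eps,\bar W^\eps)$ in $C([0,T];\mR^d\times\mR^m\times\mR^d)$ are tight. The crucial point is that the constant in Krylov's estimate for a nondegenerate It\^o process involves the diffusion matrix only through its determinant, not through its lower ellipticity bound $\eps^2$; thus, with $\tau^\eps_R:=\inf\{t:|X^\eps_t|\geq R\}$, for every Borel $g\geq0$ supported in $[0,T]\times B_R$,
\begin{equation*}
\ee\int_0^{t\wedge\tau^\eps_R}g(s,X^\eps_s)\,\dif s\leq C_R\Big(\int_0^T\!\!\int_{B_R}|g(s,x)|^{d+1}\det(\sigma_\eps\sigma_\eps^*)^{-1}(s,x)\,\dif x\,\dif s\Big)^{1/(d+1)},
\end{equation*}
with $C_R$ independent of $\eps$. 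Since $\det(\sigma_\eps\sigma_\eps^*)^{-1}\leq\det(\sigma\sigma^*)^{-1}$ pointwise, splitting off the weight by H\"older's inequality against a power of $g$ and using \eqref{HG2} (integrability exponent $2d+3$) upgrades this to the $\eps$-free, weight-free bound $\ee\int_0^{t\wedge\tau^\eps_R}g(s,X^\eps_s)\,\dif s\leq C_R\|g\|_{L^{(2d+3)/2}([0,T]\times B_R)}$; in the time-homogeneous case the power $d+1$ improves to $d$, so that \eqref{hG5} (exponent $2d+1$) gives the same with the norm $L^{(2d+1)/2}(B_R)$.

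\textbf{Step 3 (identification of the limit).} By Prokhorov and the Skorokhod representation theorem we may assume, along a subsequence, that $(X^\eps,W^\eps,\bar W^\eps)\to(X,W,\bar W)$ almost surely and uniformly on $[0,T]$ on a common probability space; let $(\sF_t)$ be the augmented filtration generated by $(X,W,\bar W)$. A standard martingale (L\'evy characterization) argument shows that $W$ is an $m$-dimensional $(\sF_t)$-Brownian motion and $X$ is $(\sF_t)$-adapted, while $\eps\bar W^\eps\to0$ uniformly; it then remains to prove $\int_0^tb(s,X^\eps_s)\,\dif s\to\int_0^tb(s,X_s)\,\dif s$ and $\int_0^t\sigma(s,X^\eps_s)\,\dif W^\eps_s\to\int_0^t\sigma(s,X_s)\,\dif W_s$ in probability. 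For each of these one inserts a continuous mollification $b_\delta$ (resp.\ $\sigma_\delta$): the mollified term converges as $\eps\to0$ by uniform convergence and dominated convergence after localizing to $B_R$, while the two remainders are controlled, uniformly in $\eps$, by the estimate of Step 2 applied to $|b-b_\delta|$ (resp., through It\^o's isometry, to $|\sigma-\sigma_\delta|^2$), which tends to $0$ in $L^{(2d+3)/2}_{loc}$ (resp.\ $L^{(2d+1)/2}_{loc}$) as $\delta\to0$ because $b$ and $\sigma$ are locally bounded; the same occupation-time estimate, which also holds for the limit process $X$ (take $g$ bounded continuous in Step 2, let $\eps\to0$, and extend by density), controls the remainder involving $X$. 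The contributions of the exit times near $\tau^\eps_R$ are disposed of by the linear growth of $b,\sigma$ and the uniform integrability of $\sup_{t\leq T}|X^\eps_t|$, sending $R\to\infty$ last. Passing to the limit in the equation of Step 1 then exhibits $(X,W)$ as a weak solution of \eqref{1}.

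\textbf{Where the difficulty lies.} The heart of the proof — and the only place the hypothesis enters — is the uniform Krylov-type estimate of Step 2 under degeneracy: the $\det(\sigma\sigma^*)^{-1}$-weighted form of Krylov's inequality is the natural tool precisely because its constant is insensitive to the ellipticity bound, but absorbing that weight is what forces the quantitative integrability of $\det(\sigma\sigma^*)^{-1}$ in \eqref{HG2}/\eqref{hG5}, the exponents $2d+3$ and $2d+1$ reflecting the powers $d+1$ and $d$ in the parabolic, resp.\ elliptic, Krylov inequality. The second delicate point is then to use this estimate to pass to the limit in the stochastic integral against a merely measurable (and possibly degenerate) $\sigma$.
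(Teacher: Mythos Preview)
Your approach is correct but takes a genuinely different route from the paper. The paper does not regularize the diffusion or invoke weak compactness; instead it starts from a weak solution $Y$ of the \emph{driftless} SDE $\dif Y_t=\sigma(t,Y_t)\dif W_t$ and manufactures the drift by Girsanov's theorem. Writing $b_\sigma:=\sigma^*(\sigma\sigma^*)^{-1}b$, it verifies Novikov's criterion for the density $\exp\{-\int b_\sigma\,\dif W-\tfrac12\int|b_\sigma|^2\,\dif s\}$ up to $\tau_R$: since $|(\sigma\sigma^*)^{-1}|\leq\|\sigma\|^{2d-2}\det(\sigma\sigma^*)^{-1}$ one has $|b_\sigma|^2\leq C\,\det(\sigma\sigma^*)^{-2}$ on $B_R$, and Krylov's estimate then yields a conditional bound $\mE\big[\int_{t_0}^{t_1}|b_\sigma|^2\,\big|\,\sF_{t_0}\big]\leq C\|\det(\sigma\sigma^*)^{-1}\|^{(2d+3)/(d+1)}_{L^{2d+3}([t_0,t_1]\times B_R)}$, small on short intervals, so exponential integrability follows from Portenko's lemma; a Stroock--Varadhan localization then removes $\tau_R$. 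Thus the exponent $2d+3$ enters through $|b_\sigma|^{2(d+1)}\det(\sigma\sigma^*)^{-1}\leq C\,\det(\sigma\sigma^*)^{-(2d+3)}$, rather than through your H\"older splitting of $g^{d+1}$ against the weight. The paper's argument is shorter and sidesteps the Skorokhod machinery and the limit identification in the stochastic integral, but it presupposes a weak solution of the driftless equation --- the proof in fact invokes \emph{continuity} of $\sigma$ for this, which is stronger than the stated ``locally bounded measurable'' hypothesis. Your compactness route is heavier in bookkeeping (exit times, mollified identification) but is self-contained on this point and makes transparent why the Krylov constant is uniform in $\eps$.
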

\noindent{\bf Example 4}
Suppose that $b$ is bounded measurable and $\sigma$ takes the following form:
$$
 \sigma(x)=
\left( \begin{array}{cccc}
\left||x|-1\right|^\alpha &0\\
0 & |x|^\beta
\end{array}\right),
$$
where $\alpha\in(0,\frac{1}{10})$ and $\beta\in(0,\frac{1}{5})$. One sees that assumption \eqref{hG5} is satisfied.
Notice that $\sigma$ is H\"older continuous on the sphere $\mS^{d-1}=\{x: |x|=1\}$, and
degenerate on a $d-1$-dimensional submanifold $\mS^{d-1}$.

\section{Proofs}

We first recall some tools of proving our main results.
Consider the following $d$-dimensional It\^o's process:
\begin{align}\label{Ito}
\xi_t=\xi_0+\int^t_0b_s\dif s+\int^t_0\sigma_s\dif W_s,
\end{align}
where $\xi_0\in\sF_0$, $b_s:\mR_+\times\Omega\to\mR^d$ is a locally integrable $\mR^d$-valued measurable adapted process
and $\sigma_s: \mR_+\times\Omega\to\mR^d\otimes\mR^m$ is a locally square integral measurable adapted process.
We recall the following Krylov's estimate \cite[Chapter 2, Theorem 2.2]{K}.
\begin{thm}\label{thm2}
Let $\xi_t$ be an It\^o's process with the form \eqref{Ito}. For $R>0$, let $\tau_R$ be the exit time of $\xi_t$ from the ball $B_R$.
For any $T>0$, there exists constants $C_1, C_2$ such that for all $0\leq t_0<t_1\leq T$
and all $f\in L^{d+1}_{loc}(\mR^{d+1})$, $g\in L^d_{loc}(\mR^d)$,
\beq\label{2}
\mathbb{E}\left(\int^{t_1\wedge\tau_R}_{t_0\wedge\tau_R}\left( \det (\sigma_s\sigma_s^*)\right)^{1/(d+1)}f(s,\xi_s)\dif s
\Big|{\sF_{t_0\wedge\tau_R}}\right)\leq C_1\| f\|_{L^{d+1}([t_0,t_1]\times B_R)},
\deq
\beq\label{22}
\mathbb{E}\left(\int^{t_1\wedge\tau_R}_{t_0\wedge\tau_R}\left( \det(\sigma_s\sigma_s^*)\right)^{1/d}g(\xi_s)ds\Big|{\sF_{t_0\wedge\tau_R}}\right)\leq C_2(t_1-t_0)^{\frac{d}{2p}}\|g\|_{L^d(B_R)}.
\deq
\end{thm}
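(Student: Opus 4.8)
This is Krylov's estimate, so a proof really amounts to recalling the classical argument of \cite{K}; the route is to encode each of \eqref{2} and \eqref{22} into an auxiliary ``barrier'' function produced by an Aleksandrov--Bakel'man--Pucci (ABP) / Monge--Amp\`ere construction, and then to read the bound off from It\^o's formula applied along $\xi$.

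\emph{Step 1 (reduction).} By monotone convergence and density it suffices to prove \eqref{2} for $0\le f\in C_c^\infty((t_0,t_1)\times B_R)$ and \eqref{22} for $0\le g\in C_c^\infty(B_R)$; the signed versions then follow from $|f|\le f^{+}+f^{-}$ together with $\|f^{\pm}\|\le\|f\|$. Moreover, by restarting the process at the $(\sF_t)$-stopping time $t_0\wedge\tau_R$ (an It\^o process restarted is again an It\^o process of the same type, with the same bound $T$), one may for exposition take $t_0=0$, so that the conditional expectation becomes an ordinary one.

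\emph{Step 2 (the barrier).} For \eqref{2} I would build $w=w(t,x)\ge0$ on $[0,t_1]\times\bar B_R$, concave in $x$, nonincreasing in $t$, vanishing on $(\{t_1\}\times\bar B_R)\cup([0,t_1]\times\partial B_R)$, solving the degenerate parabolic Monge--Amp\`ere inequality
$$
\bigl(-\p_t w\bigr)\cdot\det\bigl(-D_x^2 w\bigr)\ \ge\ c_d\,f^{\,d+1}\qquad\text{on }(0,t_1)\times B_R,
$$
together with the ABP-type (Aleksandrov measure) bound $0\le w\le C_1\|f\|_{L^{d+1}([0,t_1]\times B_R)}$, where $C_1=C_1(d,R,T)$, the dependence on $T$ entering through the height of the space--time cylinder. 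The role of the Monge--Amp\`ere inequality is the pointwise fact that for \emph{every} symmetric matrix $a\ge0$,
$$
\p_t w+\tr\bigl(a\,D_x^2 w\bigr)\ \le\ \p_t w-d\,(\det a)^{1/d}\bigl(\det(-D_x^2 w)\bigr)^{1/d}\ \le\ -(\det a)^{1/(d+1)}f,
$$
the first inequality being the matrix AM--GM inequality $\tr(aB)\ge d(\det a)^{1/d}(\det B)^{1/d}$ applied with $B=-D_x^2 w\ge0$, and the second an elementary minimization over the magnitude of $a$ — it is exactly this minimization that forces the exponent $1/(d+1)$ rather than $1/d$, the first-order term $-\p_t w$ being what compensates the mismatch of powers. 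For \eqref{22} one uses instead the stationary construction: a concave $w=w(x)\ge0$ on $B_R$ with $w|_{\partial B_R}=0$, $\det(-D^2 w)\ge c_d\,g^{\,d}$ and $\sup w\le C_2\|g\|_{L^d(B_R)}$, so that $\tr(a\,D^2 w)\le-(\det a)^{1/d}g$ for all $a\ge0$; the small-time factor in \eqref{22} then comes from interpolating this bound against the trivial one over a short time interval.

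\emph{Step 3 (It\^o's formula).} Since the barrier is a priori only Lipschitz in $x$ (an Aleksandrov solution), one either mollifies it, keeping the above inequalities up to a vanishing error, or uses an It\^o--Krylov formula for Sobolev functions. Applying it to $s\mapsto w(s,\xi_s)$ on $[0,t_1\wedge\tau_R]$, with $a_s:=\sigma_s\sigma_s^{*}$,
$$
w(t_1\wedge\tau_R,\xi_{t_1\wedge\tau_R})=w(0,\xi_0)+\int_0^{t_1\wedge\tau_R}\!\!\bigl(\p_t w+\tr(a_s D_x^2 w)+\<b_s,D_x w\>\bigr)(s,\xi_s)\,\dif s+\int_0^{t_1\wedge\tau_R}\!\!\<D_x w,\sigma_s\,\dif W_s\>.
$$
The left-hand side vanishes ($w(t_1,\cdot)=0$, or else $\xi_{\tau_R}\in\partial B_R$ where $w=0$); the stochastic integral has zero expectation since $w$ and $D_x w$ are bounded; the first-order term $\<b_s,D_x w\>$ is absorbed in the standard way (it enters the barrier equation as a lower-order term, or one treats bounded $|b_s|$ first and removes the bound by localization); and the second-order term is $\le-(\det a_s)^{1/(d+1)}f(s,\xi_s)$ by Step 2. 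Rearranging and using $w\ge0$ gives
$$
\ee\int_0^{t_1\wedge\tau_R}(\det a_s)^{1/(d+1)}f(s,\xi_s)\,\dif s\ \le\ w(0,\xi_0)\ \le\ C_1\|f\|_{L^{d+1}([0,t_1]\times B_R)},
$$
which is \eqref{2}; \eqref{22} follows verbatim from the stationary barrier.

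The heart of the matter is Step 2: solvability of the degenerate parabolic (resp.\ elliptic) Monge--Amp\`ere problem together with the sharp $L^{d+1}$ (resp.\ $L^d$) Aleksandrov bound on the barrier. Steps 1 and 3 are routine, the only technical care being the low regularity of the barrier and the handling of the drift term.
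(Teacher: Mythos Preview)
The paper does not prove this statement at all: it is quoted verbatim as Krylov's estimate with a reference to \cite[Chapter 2, Theorem 2.2]{K}, and then simply used as a black box in the proofs of Theorems~\ref{thm1} and~\ref{Th2}. Your sketch is a faithful outline of Krylov's original argument (Aleksandrov--Bakel'man--Pucci barrier plus It\^o's formula), so in effect you have reproduced the proof the paper is citing rather than departed from it; the only caveat is that the constants $C_1,C_2$ in Krylov's theorem also depend on the $L^\infty$-bounds of $b_s$ and $\sigma_s$ on $[0,T]$ (not just on $d,R,T$ as you wrote), which is why the paper always applies the estimate after localizing with $\tau_R$.
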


We also need the following stochastic Gronwall's inequality due to Scheutzew \cite{Sc} (see also \cite[Lemma 3.8]{X-Zh}).
\begin{lem}[Stochastic Gronwall's inequality]\label{im}
Let $\xi(t)$ and $\eta(t)$ be two nonnegative c\`adl\`ag $\sF_t$-adapted processes,
$A_t$ a continuous nondecreasing $\sF_t$-adapted process with $A_0=0$, $M_t$ a  local martingale with $M_0=0$. Suppose that
\begin{align}\label{Gron}
\xi(t)\leq\eta(t)+\int^t_0\xi(s)\dif A_s+M_t,\ \forall t\geq 0.
\end{align}
Then for any $0<q<p<1$ and stopping time $\tau$, we have
\begin{align}
\big[\mE(\xi(\tau)^*)^{q}\big]^{1/q}\leq \Big(\tfrac{p}{p-q}\Big)^{1/q}\Big(\mE \e^{pA_\tau/(1-p)}\Big)^{(1-p)/p}\mE\big(\eta(\tau)^*\big),  \label{gron}
\end{align}
where $\xi(t)^*:=\sup_{s\in[0,t]}\xi(s)$.
\end{lem}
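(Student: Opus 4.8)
The plan is to first remove the pathwise integral $\int_0^t\xi(s)\dif A_s$ from \eqref{Gron} by an integrating-factor computation, reducing the hypothesis to a clean pathwise domination of $\xi$ by $\e^{A_t}$ times a nonnegative process whose only stochastic ingredient is a single local martingale; and then to take $L^q$-norms, peeling off the exponential moment of $A_\tau$ by H\"older's inequality and controlling the martingale maximum by a maximal/domination inequality valid for exponents below $1$.

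For the pathwise reduction I would set $J_t:=\int_0^t\xi(s)\dif A_s$, a continuous nondecreasing adapted process (finite on compacts, since $\xi$ is c\`adl\`ag hence locally bounded and $A$ is finite). Rewriting \eqref{Gron} as $\xi(t)-J_t\leq\eta(t)+M_t$ and applying the product rule to $\e^{-A_t}J_t$ (both factors of finite variation, so there is no covariation term) gives $\dif(\e^{-A_t}J_t)=\e^{-A_t}(\xi(t)-J_t)\dif A_t\leq\e^{-A_t}(\eta(t)+M_t)\dif A_t$, whence $J_t\leq\int_0^t\e^{A_t-A_s}(\eta(s)+M_s)\dif A_s$. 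Substituting back and using $\int_0^t\e^{A_t-A_s}\dif A_s=\e^{A_t}-1$ to bound the $\eta$-contribution by $\eta(t)^*\e^{A_t}$, it remains to handle the martingale part $M_t+\int_0^t\e^{A_t-A_s}M_s\dif A_s$. A second integration by parts, applied to the semimartingale $\e^{-A_t}M_t$, identifies this combination exactly as $\e^{A_t}\tilde M_t$ with $\tilde M_t:=\int_0^t\e^{-A_s}\dif M_s$ a local martingale and $\tilde M_0=0$. Altogether $\xi(t)\leq\e^{A_t}(\eta(t)^*+\tilde M_t)$; since $\eta(t)^*+\tilde M_t\geq\e^{-A_t}\xi(t)\geq0$ and $A$ is nondecreasing, taking suprema yields the pathwise bound $\xi(\tau)^*\leq\e^{A_\tau}(\eta(\tau)^*+\tilde M_\tau^*)$, where $\tilde M_\tau^*:=\sup_{s\leq\tau}\tilde M_s$.

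For the probabilistic stage I would raise to the power $q$ and apply H\"older's inequality with the conjugate pair $\big(\tfrac{p}{q(1-p)},\,\tfrac{p}{p-q+pq}\big)$, chosen precisely so that the exponential factor contributes $\big(\mE\,\e^{pA_\tau/(1-p)}\big)^{q(1-p)/p}$, matching the right-hand side of \eqref{gron}. This leaves the factor $\mE\big(\eta(\tau)^*+\tilde M_\tau^*\big)^{\kappa}$ raised to a power, with $\kappa=\tfrac{pq}{p-q+pq}$, the inequality $\kappa<1$ being exactly equivalent to the hypothesis $q<p$. The key point is that the nonnegative process $L_t:=\eta(t)^*+\tilde M_t$ is Lenglart-dominated by the nondecreasing process $\eta(t)^*$: after reducing to the case $\mE\,\eta(\tau)^*<\infty$, the local martingale $\tilde M$ is bounded below on $[0,\tau]$ by the integrable random variable $-\eta(\tau)^*$, hence is a genuine supermartingale there, so $\mE\,L_\sigma\leq\mE\,\eta(\sigma)^*$ for every bounded stopping time $\sigma\leq\tau$. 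Lenglart's domination inequality then gives $\mE(L_\tau^*)^{\kappa}\leq C_\kappa\,\mE(\eta(\tau)^*)^{\kappa}$, and Jensen's inequality (concavity of $x\mapsto x^{\kappa}$) upgrades this to $(\mE\,\eta(\tau)^*)^{\kappa}$, restoring $\mE\,\eta(\tau)^*$ to the first power; tracking the constants through the sharp form of the domination bound produces the factor $\big(\tfrac{p}{p-q}\big)^{1/q}$.

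I expect the main obstacle to lie in this second stage. Since $q<1$, neither Doob's $L^p$-inequality nor the Burkholder--Davis--Gundy inequality is available, and we have no control on the quadratic variation of $M$, so the running maximum $\tilde M_\tau^*$ of the local martingale cannot be bounded by any standard $L^p$ martingale estimate. The resolution is the maximal/domination inequality for exponents below $1$, made applicable by the sign structure $\eta(t)^*+\tilde M_t\geq0$, which turns the nondecreasing $\eta^*$ into a legitimate dominating process. The delicate technical points are the localization that converts the bounded-below local martingale into an honest supermartingale (justifying the expectation estimate $\mE\,L_\sigma\leq\mE\,\eta(\sigma)^*$), and the careful bookkeeping needed to extract the sharp constant $\big(\tfrac{p}{p-q}\big)^{1/q}$ rather than a crude one.
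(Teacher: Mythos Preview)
The paper does not supply its own proof of this lemma; it is quoted verbatim as an external tool from Scheutzow \cite{Sc} (see also \cite[Lemma~3.8]{X-Zh}), so there is nothing in the paper to compare against.

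Your proposal is essentially Scheutzow's original argument and is sound. The integrating-factor step reducing \eqref{Gron} to the pathwise bound $\xi(t)\leq \e^{A_t}\big(\eta(t)^*+\tilde M_t\big)$ with $\tilde M_t=\int_0^t\e^{-A_s}\,\dif M_s$ is exactly how the cited proof begins, and your identification of the martingale combination via the product rule on $\e^{-A_t}M_t$ is correct. The second stage---H\"older with the pair you describe, followed by a sub-$L^1$ maximal estimate for the nonnegative process $\eta^*+\tilde M$ exploiting that $\tilde M$ is a local martingale bounded below by $-\eta(\tau)^*$---is also the mechanism Scheutzow uses. One remark on the constant: the sharp factor $\big(\tfrac{p}{p-q}\big)^{1/q}$ in \cite{Sc} is obtained not from the general Lenglart bound (whose usual constant $\tfrac{2-\kappa}{1-\kappa}$ would be slightly worse) but from the one-sided weak maximal inequality $\mP\big(\sup_{s\leq\tau}(\eta(\tau)^*+\tilde M_s)\geq c\big)\leq \mE\,\eta(\tau)^*/c$, which follows because the stopped process is a supermartingale with initial value $\eta(\tau)^*$ after conditioning; integrating this tail bound against $\kappa c^{\kappa-1}$ and then applying H\"older is what produces the stated constant. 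Your outline captures this correctly in spirit, and the ``careful bookkeeping'' you flag is precisely this point.
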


The following lemma is taken from \cite[p. 1, Lemma 1.1]{Po}.
\begin{lem}\label{Le1}
Let $\{\xi(t)\}_{t\in[0,T]}$ be a nonnegative measurable ($\sF_t$)-adapted process. Assume that for all $0\leq s\leq t\leq T$,
$$
\mE\left(\int^t_s\xi(r)\dif r\Bigg|\sF_s\right)\leq\rho(s,t),
$$
where $\rho(s,t)$ is a nonrandom interval function satisfying the following conditions:

(i) $\rho(t_1,t_2)\leq\rho(t_3,t_4)$ if $(t_1,t_2)\subset(t_3,t_4)$;

(ii) $\lim_{h\downarrow 0}\sup_{0\leq s<t\leq T, |t-s|\leq h}\rho(s,t)=\kappa,\ \ \kappa\geq0$.\\
Then for any real $\lambda<\kappa^{-1}$ (if $\kappa=0$, then $\kappa^{-1}=+\infty$),
$$
\mE\exp\left\{\lambda\int^T_0\xi(r)\dif r\right\}\leq C=C(\lambda,\rho, T)<+\infty.
$$
\end{lem}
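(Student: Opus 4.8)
The plan is to run the classical Khasminskii-type argument in conditional form: expand the exponential into a power series, control each conditional moment of $\int_0^T\xi(r)\,\dif r$ by a ``peeling'' conditional-Fubini computation on the simplex, and sum a geometric series. We may assume $\lambda>0$, since for $\lambda\le 0$ the claim is trivial from $\xi\ge 0$. The only place the hypotheses on $\rho$ enter is the choice of a mesh: as $\kappa<\lambda^{-1}$, fix $\theta$ with $\kappa<\theta<\lambda^{-1}$, and by (ii) choose $h>0$ such that $\sup_{0\le s<t\le T,\ t-s\le h}\rho(s,t)\le\theta$; then pick a partition $0=s_0<s_1<\cdots<s_N=T$ with $s_{k+1}-s_k\le h$, where $N$ depends only on $\lambda,\rho,T$.

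The core step is the per-block estimate
$$
\mE\Big(\exp\big\{\lambda\textstyle\int_{s_k}^{s_{k+1}}\xi(r)\,\dif r\big\}\,\Big|\,\sF_{s_k}\Big)\le\frac{1}{1-\lambda\theta}.
$$
To prove it I would expand the exponential (monotone convergence lets us interchange the sum with $\mE$, all terms being nonnegative) and use, for each $n\ge 1$, the symmetrization identity $\big(\int_{s_k}^{s_{k+1}}\xi(r)\,\dif r\big)^n=n!\int_{\Delta_n}\xi(r_1)\cdots\xi(r_n)\,\dif r$, where $\Delta_n=\{s_k\le r_1\le\cdots\le r_n\le s_{k+1}\}$. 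Taking $\mE(\,\cdot\mid\sF_{s_k})$ and applying conditional Fubini, I would peel off the variable $r_n$: conditioning the inner integral on $\sF_{r_{n-1}}$ gives $\mE\big(\int_{r_{n-1}}^{s_{k+1}}\xi(r_n)\,\dif r_n\,\big|\,\sF_{r_{n-1}}\big)\le\rho(r_{n-1},s_{k+1})\le\rho(s_k,s_{k+1})\le\theta$ by the monotonicity (i), and since $\xi(r_1)\cdots\xi(r_{n-1})$ is $\sF_{r_{n-1}}$-measurable by adaptedness, the tower property produces a clean factor $\theta$ and leaves an integral of the same shape over $\Delta_{n-1}$. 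Iterating $n$ times yields $\mE\big(\big(\int_{s_k}^{s_{k+1}}\xi\big)^n\,\big|\,\sF_{s_k}\big)\le n!\,\theta^n$, hence $\sum_{n\ge 0}\frac{\lambda^n}{n!}\,n!\,\theta^n=(1-\lambda\theta)^{-1}$ since $\lambda\theta<1$.

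It remains to chain the blocks. Writing $\exp\{\lambda\int_0^T\xi(r)\,\dif r\}=\prod_{k=0}^{N-1}\exp\{\lambda\int_{s_k}^{s_{k+1}}\xi(r)\,\dif r\}$, I would condition on $\sF_{s_{N-1}}$ (the first $N-1$ factors being $\sF_{s_{N-1}}$-measurable), apply the per-block bound to the last factor, pull out the constant $(1-\lambda\theta)^{-1}$, and induct downward over $k$ to conclude $\mE\exp\{\lambda\int_0^T\xi(r)\,\dif r\}\le(1-\lambda\theta)^{-N}=:C(\lambda,\rho,T)<\infty$. The one genuinely delicate point, everything else being bookkeeping, is the nested-conditioning step: one must justify the conditional Fubini interchange for the jointly measurable adapted integrand and verify that $\xi(r_1)\cdots\xi(r_{n-1})$ is $\sF_{r_{n-1}}$-measurable, so that the tower property can legitimately be applied inside the simplex integral; this is precisely where the measurability and adaptedness hypotheses on $\xi$ are used.
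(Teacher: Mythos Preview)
Your argument is correct and is precisely the classical Khasminskii-type proof. Note, however, that the paper does not supply its own proof of this lemma: it is quoted directly from Portenko \cite[p.~1, Lemma 1.1]{Po}, so there is no ``paper's proof'' to compare against beyond the citation. Your write-up reproduces the standard proof found there (partition $[0,T]$ into small subintervals, bound each conditional exponential moment by a geometric series via the simplex-peeling identity, and chain the blocks with the tower property), and the delicate point you flag---the nested conditional Fubini step---is exactly the one that requires care.
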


The following localization result will be used to construct a global weak solution (see \cite{S-V}).
\begin{thm}\label{Ext}
Let $\mC$ be the space of all $\mR^d$-valued continuous functions on $\mR_+$, and $\cB_t(\mC)$ the natural $\sigma$-filtration.
Let $(\mP_n)_{n\in\mN}$ be a family of probability measures on $\mC$ and
$(\tau_n)_{n\in\mN}$ a sequence of nondecreasing stopping times. Let $\tau_0\equiv 0$.
Suppose that for each $n\in\mN$, $\mP_{n}$ equals $\mP_{n-1}$ on $\cB_{\tau_{n-1}}(\mC)$,
and for any $T> 0$,
\begin{align}\label{Lim-Stop}
\lim_{n\to\infty} \mP_n (\tau_n\leq T)=0.
\end{align}
Then there is a unique probability measure $\mP$ over $\cB_\infty(\mC)$ such that  $\mP$ equals $\mP_n$ on $\cB_{\tau_n}(\mC)$ and
$\mP_n$ weakly converges to $\mP$ as $n\to\infty$.
\end{thm}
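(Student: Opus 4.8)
The plan is to construct $\mP$ by first determining it consistently at every finite time horizon and then patching these finite-horizon laws together; condition \eqref{Lim-Stop} is exactly what guarantees that no probability mass is lost, and controlling this is the heart of the matter. To begin, since $\tau_{n-1}\leq\tau_n$ forces $\cB_{\tau_{n-1}}(\mC)\subseteq\cB_{\tau_n}(\mC)$, the hypothesis $\mP_n=\mP_{n-1}$ on $\cB_{\tau_{n-1}}(\mC)$ propagates by induction to $\mP_m=\mP_n$ on $\cB_{\tau_n}(\mC)$ for all $m\geq n$. I would then use two elementary facts about stopping-time $\sigma$-algebras: for $A\in\cB_T(\mC)$ one has $A\cap\{\tau_n>T\}\in\cB_{\tau_n}(\mC)$, and $\{\tau_n\leq T\}\in\cB_{\tau_n}(\mC)$. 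Hence $\nu^T_n(A):=\mP_n\big(A\cap\{\tau_n>T\}\big)$ is unaffected by replacing $\mP_n$ with $\mP_m$ for $m\geq n$; since $\{\tau_n>T\}$ increases with $n$, the sequence $(\nu^T_n)_{n}$ is a \emph{nondecreasing} sequence of finite measures on $(\mC,\cB_T(\mC))$. Writing $\mP_n(A)=\nu^T_n(A)+\mP_n\big(A\cap\{\tau_n\leq T\}\big)$ with the last term bounded by $\mP_n(\tau_n\leq T)$, which tends to $0$ by \eqref{Lim-Stop}, it follows that $\mu_T(A):=\lim_{n}\mP_n(A)=\sup_n\nu^T_n(A)$ exists for every $A\in\cB_T(\mC)$. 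Since an increasing pointwise limit of measures is again a measure, $\mu_T$ is a probability measure on $\cB_T(\mC)$ (total mass $1-\inf_n\mP_n(\tau_n\leq T)=1$), and the family $(\mu_T)_{T>0}$ is consistent: $\mu_{T'}=\mu_T$ on $\cB_T(\mC)$ for $T'\geq T$.

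Next I would patch the horizons. The family $(\mu_k)_{k\in\mN}$ is a consistent family of Borel probability measures on the Polish spaces $C([0,k];\mR^d)$ linked by the continuous restriction maps, so by Kolmogorov's extension theorem (applicable since each $C([0,k];\mR^d)$ is Polish and $\cB_\infty(\mC)=\sigma\big(\bigcup_{k}\cB_k(\mC)\big)$) it extends to a unique probability measure $\mP$ on $(\mC,\cB_\infty(\mC))$ with $\mP=\mu_k$ on $\cB_k(\mC)$ for every $k$, hence $\mP=\mu_T$ on $\cB_T(\mC)$ for every $T>0$. In particular $\mP(\tau_n\leq T)=\lim_m\mP_m(\tau_n\leq T)=\mP_n(\tau_n\leq T)\to0$, so $\tau_n\uparrow\infty$ holds $\mP$-a.s.

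Finally I would check the two stated properties. For $\mP=\mP_n$ on $\cB_{\tau_n}(\mC)$: given $A\in\cB_{\tau_n}(\mC)$, for each $t\geq0$ the set $A\cap\{\tau_n\leq t\}$ lies in $\cB_t(\mC)\cap\cB_{\tau_n}(\mC)$, so $\mP(A\cap\{\tau_n\leq t\})=\mu_t(A\cap\{\tau_n\leq t\})=\lim_m\mP_m(A\cap\{\tau_n\leq t\})=\mP_n(A\cap\{\tau_n\leq t\})$; letting $t\uparrow\infty$ gives $\mP(A\cap\{\tau_n<\infty\})=\mP_n(A\cap\{\tau_n<\infty\})$, and the contribution of $\{\tau_n=\infty\}$ is handled likewise, using the trace identity $\cB_{\tau_n}(\mC)\cap\{\tau_n=\infty\}=\cB_\infty(\mC)\cap\{\tau_n=\infty\}$, a monotone-class argument with equal total masses, and the fact that $D\cap\{\tau_n>k\}\in\cB_k(\mC)\cap\cB_{\tau_n}(\mC)$ for $D\in\cB_T(\mC)$ and $k\geq T$, then $k\uparrow\infty$. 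For the weak convergence: set-wise convergence $\mP_m\to\mP$ on each $\cB_T(\mC)$ gives $\int f\,\dif\mP_m\to\int f\,\dif\mP$ for every bounded $\cB_T(\mC)$-measurable $f$ (approximate $f$ uniformly by simple functions), which is stronger than weak convergence of the time-$[0,k]$ marginals; since weakly convergent sequences on Polish spaces are tight and tightness of all the marginals assembles into tightness on $\mC$, one gets $\mP_m\Rightarrow\mP$. Uniqueness is automatic: any competitor agrees with every $\mP_n$ on $\cB_{\tau_n}(\mC)$, hence, by the decomposition $A=(A\cap\{\tau_n>T\})\cup(A\cap\{\tau_n\leq T\})$ together with $\mP_n(\tau_n\leq T)\to0$, with $\mP$ on each $\cB_T(\mC)$, and $\bigcup_{T>0}\cB_T(\mC)$ generates $\cB_\infty(\mC)$. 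The one genuine difficulty is ensuring that no mass escapes as $n\to\infty$; this is precisely what \eqref{Lim-Stop} supplies, making $\nu^T_n$ increase to a \emph{full} probability at each finite horizon and allowing the finite-horizon laws to be patched into a global one.
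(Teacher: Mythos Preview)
The paper does not actually supply a proof of this theorem; it is quoted as a known localization/extension result with a reference to Stroock--Varadhan \cite{S-V}. So there is no ``paper's own proof'' to compare against---you have written a self-contained argument for a result that the authors take off the shelf.

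Your argument is essentially correct and follows the standard route. A few small remarks. First, the justification ``approximate $f$ uniformly by simple functions'' for passing from setwise convergence on $\cB_T(\mC)$ to $\int f\,\dif\mP_m\to\int f\,\dif\mP$ is not quite right as stated (bounded measurable functions are not uniformly approximable by simple functions). What actually holds here is stronger: once you have established $\mP=\mP_m$ on $\cB_{\tau_m}(\mC)$, the decomposition $A=(A\cap\{\tau_m>T\})\cup(A\cap\{\tau_m\leq T\})$ together with $\{\tau_m\leq T\}\in\cB_{\tau_m}(\mC)$ gives the uniform bound
\[
\sup_{A\in\cB_T(\mC)}\big|\mP(A)-\mP_m(A)\big|\leq 2\,\mP_m(\tau_m\leq T)\to 0,
\]
i.e.\ total-variation convergence on each $\cB_T(\mC)$, from which convergence of integrals of bounded $\cB_T$-measurable functions is immediate. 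Second, your handling of the $\{\tau_n=\infty\}$ piece is correct but could be streamlined: since $\cB_{\tau_n}(\mC)\subseteq\cB_\infty(\mC)$, it suffices to show $\mP(D\cap\{\tau_n=\infty\})=\mP_n(D\cap\{\tau_n=\infty\})$ for all $D\in\cB_\infty(\mC)$, which your monotone-class step delivers. Third, the tightness argument (marginal tightness on each $C([0,k];\mR^d)$ assembling into tightness on $\mC$) is correct; it may help to state explicitly that relative compactness in $C(\mR_+;\mR^d)$ is characterized by relative compactness of all restrictions to $[0,k]$. With these clarifications, your proof is sound.
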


Now we can give
\begin{proof}[Proof of Theorem \ref{thm1}]
Let $X_t$ and $Y_t$ solve SDE $(\ref{1})$ with starting points $x$ and $y$, respectively.
For $R>0$, define stopping time
$$
\tau_R:=\inf\{t>0: |X_t|\vee|Y_t|>R\}.
$$
Set $Z_t:=X_t-Y_t$. By It\^o's formula and the assumption, we have
$$
\aligned
|Z_{t\wedge\tau_R}|^2&=|x-y|^2+2\int_0^{t\wedge\tau_R}\<Z_s, b(s,X_s)-b(s,Y_s)\>\dif s\\
&\quad+2\int_0^{t\wedge\tau_R}\langle Z_s,\sigma(s,X_s)-\sigma(s,Y_s)\rangle\dif W_s\\
&\quad+\int_0^{t\wedge\tau_R}\|\sigma(s,X_s)-\sigma(s,Y_s)\|^2\dif s\\
&\leq |x-y|^2+\int_0^{t\wedge\tau_R}|Z_s|^2(F(s,X_s)+F(s,Y_s))\dif s\\
&\quad+2\int_0^{t\wedge\tau_R}\langle Z_s,\sigma(s,X_s)-\sigma(s,Y_s)\rangle\dif W_s.
\endaligned
$$
By stochastic Gronwall's inequality \eqref{gron}, for any $0<q<p<1$ and stopping time $\tau'$, we have
\begin{align}\label{LL1}
\mE\left(\sup_{t\in[0,T\wedge\tau']}|Z_{t\wedge\tau_R}|^{2q}\right)\leq C|x-y|^2\left(\mE\e^{pA_{T\wedge\tau'}/(1-p)}\right)^{1-1/p},
\end{align}
where
$$
A_t:=\int^{t\wedge\tau_R}_0 (F(s,X_s)+F(s,Y_s))\dif s.
$$
By Krylov's estimate \eqref{2} and the assumption, we have
\begin{align*}
\mE A_t&=\mE\left(\int^{t\wedge\tau_R}_0\det(\sigma\sigma^*)^{1/p}(s,X_s)(F\det(\sigma\sigma^*)^{-1/p})(s,X_s)\dif s\right)\\
&\leq C\left(\int^{t}_0\int_{B_R}|F(s,x)|^p\det(\sigma\sigma^*)^{-1}(s,x)\dif x\dif s\right)^{1/p}<\infty.
\end{align*}
In particular, $t\mapsto A_t$ is a continuous adapted process and if we define $\tau'_N:=\inf\{t>0: A_t>N\}$, then
$$
\mP\left(\lim_{N\to\infty}\tau'_N=\infty\right)=1.
$$
In \eqref{LL1}, we replace $\tau'$ by $\tau'_N$ and let $x=y$, then
$$
\mE\left(\sup_{t\in[0,T\wedge\tau'_N]}|Z_{t\wedge\tau_R}|^{2q}\right)=0.
$$
Letting $N,R\to\infty$, by Fatou's lemma we get the pathwise uniqueness.
If $b$ and $\sigma$ are time-independent, we can use \eqref{22} instead of \eqref{2} to derive the same result.
\end{proof}

\begin{proof}[Proof of Theorem \ref{Th2}]
We shall use Theorem \ref{Ext} and Girsanov's theorem to construct a solution.
First of all, since $\sigma$ is continuous and linear growth, it is well known that there is a solution $(Y_t, W_t)$ solving the following SDE:
$$
\dif Y_t=\sigma(Y_t)\dif W_t,\  \ Y_0=y.
$$
For $R>0$, define a stopping time
$$
\tau_R:=\{t>0: |Y_t|>R\}
$$
and let $b_\sigma:=\sigma^*(\sigma\sigma^*)^{-1}b$ and
$$
\dif\mQ_R:=\exp\left\{-\int^{T\wedge\tau_R}_0b_\sigma(s,Y_s)\dif W_s-\frac{1}{2}\int^{T\wedge\tau_R}_0|b_\sigma(s,Y_s)|^2\dif s\right\}\dif \mP.
$$
In order to show that the above $\mQ_R$ is a probability measure, by Novikov's criterion it suffices to verify that
\begin{align}\label{LL2}
\mE\exp\left\{\frac{1}{2}\int^{T\wedge\tau_R}_0|b_\sigma(s,Y_s)|^2\dif s\right\}<\infty.
\end{align}
By Krylov's estimate \eqref{2}, there is a constant $C>0$ such that for all $0\leq t_0<t_1\leq T$,
\begin{align*}
\mE\left(\int^{t_1\wedge\tau_R}_{t_0\wedge\tau_R}|b_\sigma(s,Y_s)|^2\dif s\Big|\sF_{t_0\wedge\tau_R}\right)
\leq C \|\det(\sigma\sigma^*)^{-1/(d+1)}|b_\sigma|^2\|_{L^{d+1}([t_0,t_1]\times B_R)}.
\end{align*}
For $|x|\leq R$ and $s\in[0,T]$, noticing that 
$$
|(\sigma\sigma^*)^{-1}(s,x)|\leq\|\sigma\|^{2d-2}_{L^\infty([0,T]\times B_R)}[\det(\sigma\sigma^*)(s,x)]^{-1},
$$
we have
$$
|b_\sigma(s,x)|\leq \|\sigma\|^{2d-1}_{L^\infty([0,T]\times B_R)}\|b\|_{L^\infty([0,T]\times B_R)}[\det(\sigma\sigma^*)(s,x)]^{-1}.
$$
Hence,
$$
\mE\left(\int^{t_1\wedge\tau_R}_{t_0\wedge\tau_R}|b_\sigma(s,Y_s)|^2\dif s\Big|\sF_{t_0\wedge\tau_R}\right)
\leq C \|\det(\sigma\sigma^*)^{-1}\|^{\frac{2d+3}{d+1}}_{L^{2d+3}([t_0,t_1]\times B_R)}.
$$
By Lemma \ref{Le1}, we get \eqref{LL2}. Thus,
by Girsanov's theorem, the process
$$
\tilde W_t:=W_t-\int^{t\wedge\tau_R}_0(\sigma^*(\sigma\sigma^*)^{-1}b)(s,Y_s)\dif s
$$
is still a Brownian motion under $\mQ_R$. In other words, under $\mQ_R$,
$(Y_t, \tilde W_t)$ solves the following SDE:
$$
Y_{t\wedge\tau_R}=y+\int^{t\wedge\tau_R}_0\sigma(s,Y_s)\dif \tilde W_s+\int^{t\wedge\tau_R}_0b(s, Y_s)\dif s.
$$
On the other hand, since $b$ and $\sigma$ are linear growth, it is standard to show that
\begin{align}\label{HJ0}
\mQ_R(\tau_R\leq T)\leq\mE^{\mQ_R}\left(\sup_{t\in[0,T]}|Y_{t\wedge\tau_R}|^2\right)/R^2\leq C/R^2,
\end{align}
where $C$ is independent of $R$.
For $m\in\mN$, let $\mP_m$ be the law of $Y$ in the space of continuous functions under $\mQ_m$
and $\tau_m$ the exit time of canonical process $X_t(\omega)$ from the ball $B_m$. Then by \eqref{HJ0}, for any $T>0$,
$$
\lim_{m\to\infty}\mP_m(\tau_m\leq T)=0.
$$
By Theorem \ref{Ext}, we can extend $\mP_m$ to a probability measure on $\mC$ so that
$\mP=\mP_m$ on $\sB_{\tau_m}(\mC)$.
The proof is complete.
\end{proof}

{\bf Acknowledgement: } The authors would like to thank Professor Feng-Yu Wang for quite useful suggestions about the conditions of Theorem \ref{thm1}.

\end{document}